\newtheorem{thm}{Theorem}[section]
\newtheorem{cor}[thm]{Corollary}
\newtheorem{lemma}[thm]{Lemma}
\newtheorem{prop}[thm]{Proposition}
\newtheorem{prob}[thm]{Problem}
\begin{document}

\title{Inverses of Bipartite Graphs}

 \author{Yujun Yang\thanks{School of Mathematics and Information Science,
  Yantai University, Yantai, Shandong 264005, China. Partially supported by a grant from National Natural Sciences Foundation of China (No. 11671347).}\; and
  Dong Ye\thanks{Corresponding author. Department of Mathematical Sciences and
  Center for Computational Sciences, Middle Tennessee State University,
  Murfreesboro, TN 37132; Email: dong.ye@mtsu.edu. Partially supported by a grant from Simons Foundation (No. 359516).}
}
\date{ }

\maketitle

\begin{abstract}
Let $G$ be a bipartite graph and its adjacency matrix $\mathbb A$.
If $G$ has a unique perfect matching, then $\mathbb A$ has an inverse $\mathbb A^{-1}$
which is a symmetric integral matrix, and hence the adjacency matrix of a multigraph.
The inverses of bipartite graphs with
unique perfect matchings have a strong connection to M\"obius functions of
posets. In this note, we characterize all bipartite
graphs with a unique perfect matching whose adjacency matrices have
inverses diagonally similar to non-negative matrices, which
settles an open problem of Godsil on inverses of bipartite graphs
in [Godsil, Inverses of Trees, Combinatorica 5 (1985) 33-39].
\end{abstract}

\section{Introduction}
Throughout the paper, a graph means a simple graph (no loops and parallel edges). If
parallel edges and loops are allowed, we use
multigraph instead.
Let $G$ be a bipartite graph with bipartition $(R,C)$. The adjacency matrix  $\mathbb A$ of
$G$ is defined such that the $ij$-entry $(\mathbb A)_{ij}=1$ if $ij\in E(G)$, and 0 otherwise. The bipartite
adjacency matrix $\mathbb B$ of $G$ is defined as the $ij$-entry $(\mathbb B)_{ij}=(\mathbb A)_{ij}=1$
for $i\in R$ and $j\in C$. So $\mathbb B$ is an $|R|\times |C|$-matrix and
\[\mathbb A=\begin{bmatrix} 0 & \mathbb B\\
                                            \mathbb B^\intercal & 0
                                            \end{bmatrix}.\]

A {\em perfect matching} $M$ of $G$ is a set of disjoint edges covering all vertices of $G$.
If a bipartite graph $G$ has a perfect matching, then its bipartite adjacency matrix $\mathbb B$
is a square matrix. Godsil
proved that if a bipartite graph $G$ has a unique perfect matching, then $\mathbb B$ is similar to a lower
triangular matrix with all diagonal entries equal to 1
by permuting rows and columns (\cite{G}, see also \cite{SC}). So in the following, we
always assume that the bipartite adjacency matrix of a bipartite graph with a unique perfect matching is a lower triangular matrix.
Clearly, $\mathbb B$ is invertible and its inverse is an integral matrix (cf. \cite{G, YYMK}).
If $\mathbb B^{-1}$ is non-negative (i.e. all entries are non-negative), then it is the bipartite adjacency matrix of another bipartite multigraph:
the $ij$-entry is the number of edges joining the vertices $i$ and $j$.
However, the adjacency matrix of a graph $G$ has a non-negative inverse if and only if the graph $G$ is the disjoint union of $K_2$'s and $K_1$'s (cf. Lemma 1.1 in \cite{HM}, and \cite{H76}).


The inverse of $\mathbb B$ is {\em diagonally similar} to a non-negative integral matrix $\mathbb B^{+}$  if there exists a diagonal matrix $\mathbb D$ with -1
and 1 on its diagonal such that $\mathbb D\mathbb B^{-1}\mathbb D= \mathbb B^+$. So $\mathbb B^+$ is a bipartite adjacency matrix of
a bipartite multigraph that is called the inverse of the bipartite graph $G$ in \cite{G} (a broad definition of
graph inverse is given in the next section).
The following is a problem
raised by Godsil in \cite{G} which is still open \cite{G15}.

\begin{prob}[Godsil, \cite{G}]\label{prob}
Characterize the bipartite graphs with unique perfect matchings such
that $\mathbb B^{-1}$ is diagonally similar to a non-negative matrix.
\end{prob}

\begin{figure}[!hbtp]\refstepcounter{figure}\label{fig:poset}
\begin{center}
\includegraphics[scale=1]{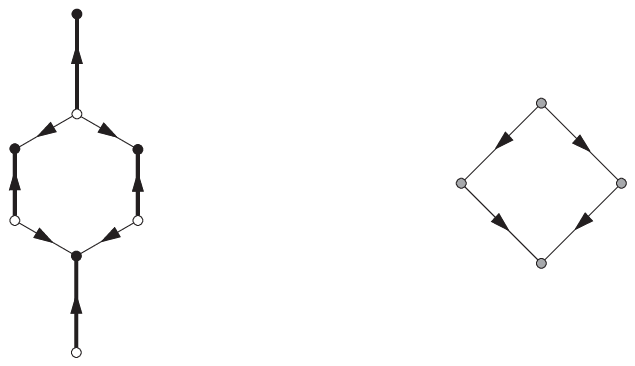}\\
{Figure \ref{fig:poset}: A bipartite graph with a unique perfect matching (left) and its corresponding digraph (right).}
\end{center}
\end{figure}

The bipartite graphs with unique perfect matchings are of particular
interest because of the combinatorial interest of their inverses (cf. \cite{G,MM}).
Let $G$ be a bipartite graph with a unique
perfect matching $M$, and $(R,C)$ be the bipartition of $G$.
Let $D$ be the digraph obtained from $G$ by orienting all edges from $R$ to $C$
and then contracting all edges in $M$.
Simion and Cao proved that the digraph $D$ is acyclic (\cite{SC}). For example,
see Figure~\ref{fig:poset}.
The acyclic digraph $D$
corresponds to a poset $(\mathcal P, \le)$ such that for $a_i,a_j\in\mathcal P=V(D)$,
there is a directed path from $a_j$ to $a_i$ in $D$ if and only if
$a_i\le a_j$ in $\mathcal P$. The {\em Zeta matrix} $\mathbb Z$ of $\mathcal P$ is defined as follows
(cf. Chapter 4 in \cite{MA})
\[(\mathbb Z)_{ij} :=\left\{
 \begin{array}{ll}
1   &\mbox{if $a_i\le a_j $;}\\
0  &\mbox{otherwise,}
 \end{array}
 \right.
\]
The modified Zeta matrix $\mathbb Z(x)$ of $\mathcal P$ is obtained by replacing the entry 1 by
a variable $x$ for a comparable pair of $\mathcal P$
which is not an arc of the digraph $D$. Then $\mathbb Z(1)=\mathbb Z$ the Zeta
matrix of $\mathcal P$, and $\mathbb Z(0)=\mathbb B$ the bipartite adjacency matrix of $G$. Note that $\mathbb Z(0)$ is  the adjacency matrix of $D$  and $\mathbb Z (1)$ is the adjacency matrix of the transitive closure of $D$.
The M\"obius function on the interval of $[a_i,a_j]$
in $\mathcal P$ is $\mu(a_i,a_j)=(\mathbb Z^{-1})_{ij}$ (see Ex. 22 in
Chapter 2 of Lov\'asz on page 216 in \cite{L}), and $\mathbb Z^{-1}$ is the M\"obius matrix
of $(\mathcal P, \le)$.  On the other hand, the Zeta matrix of a poset $(\mathcal P, \le)$
is a lower triangular matrix, corresponding to a bipartite adjacency matrix
of a bipartite graph with a unique perfect matching. This sets up a connection
between inverses of bipartite graphs with unique perfect matchings and M\"obius
functions of posets.

As observed in \cite{G}, if $(\mathcal P,\le)$ is a geometric lattice (a finite
matroid lattice \cite{Welsh}) or the face-lattice of a convex polytope \cite{BG}), then the M\"obius
matrix of $\mathcal P$ is diagonally similar to a non-negative matrix (cf. Corollary 4.34 in \cite{MA}).
Godsil \cite{G} proved that if $G$ is a tree
with a perfect matching, then the inverse of its adjacency matrix is
diagonally similar to a non-negative matrix. Further, it has been observed that
if $G$ and $H$ are two bipartite graphs with the property stated
in Problem~\ref{prob}, then the Kronecker product $G\otimes H$
is again a bipartite graph with
the property \cite{G}. The following is a partial solution to Problem~\ref{prob}.

\begin{thm}[Godsil, \cite{G}]\label{thm:Godsil}
Let $G$ be a bipartite graph with a unique perfect matching $M$ such that $G/M$ is bipartite.
Then $\mathbb B^{-1}$
is diagonally similar to a non-negative matrix.
\end{thm}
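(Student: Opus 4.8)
The plan is to work with the digraph $D = G/M$ and the associated poset $(\mathcal P, \le)$, and to exploit the fact that $\mathbb B^{-1} = \mathbb Z(0)^{-1}$ can be computed combinatorially. Recall from the theory of Möbius functions on posets (Hall's theorem, or the interpretation of $\mathbb Z^{-1}$ via chains) that for a comparable pair $a_i \le a_j$, the entry $(\mathbb Z^{-1})_{ij}$ is an alternating sum over chains from $a_i$ to $a_j$. More directly, since $\mathbb Z(0) = \mathbb B = I + N$ where $N$ is the strictly-lower-triangular adjacency matrix of the acyclic digraph $D$, we have the finite Neumann expansion
\[
\mathbb B^{-1} = (I+N)^{-1} = \sum_{k \ge 0} (-1)^k N^k,
\]
so that $(\mathbb B^{-1})_{ij} = \sum_{k\ge 0} (-1)^k (\text{number of directed walks of length } k \text{ from } a_j \text{ to } a_i \text{ in } D)$. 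Since $D$ is acyclic, walks are paths and the sum is finite. Thus $(\mathbb B^{-1})_{ij}$ is a signed count of directed paths in $D$.

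**Using that $G/M = D$ is bipartite.** The hypothesis is that the digraph $D$, viewed as an undirected graph, is bipartite. I would fix a $2$-coloring of $V(D)$ with colors $0,1$, giving a function $c: V(D) \to \{0,1\}$, and define the diagonal matrix $\mathbb D$ with $(\mathbb D)_{ii} = (-1)^{c(a_i)}$. The point is that every directed edge of $D$ joins vertices of opposite color, so a directed path of length $k$ from $a_j$ to $a_i$ exists only when $c(a_i)$ and $c(a_j)$ differ by $k \pmod 2$, i.e. $(-1)^k = (-1)^{c(a_i) - c(a_j)} = (\mathbb D)_{ii}(\mathbb D)_{jj}$. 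Therefore in the entry $(\mathbb D \mathbb B^{-1}\mathbb D)_{ij} = (\mathbb D)_{ii}(\mathbb B^{-1})_{ij}(\mathbb D)_{jj}$, every path of length $k$ contributes the sign $(-1)^k \cdot (\mathbb D)_{ii}(\mathbb D)_{jj} = (-1)^k \cdot (-1)^k = 1$. Hence $(\mathbb D\mathbb B^{-1}\mathbb D)_{ij}$ equals the total (unsigned) number of directed paths from $a_j$ to $a_i$ in $D$, which is non-negative. Setting $\mathbb B^+ = \mathbb D\mathbb B^{-1}\mathbb D$ gives the required non-negative integral matrix diagonally similar to $\mathbb B^{-1}$.

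**Carrying it out cleanly.** Concretely I would: (1) verify the Neumann expansion and the directed-path interpretation of $(\mathbb B^{-1})_{ij}$ using acyclicity of $D$ (Simion–Cao); (2) choose the $2$-coloring $c$ of $D$ and define $\mathbb D$ accordingly, checking that $\mathbb D$ has $\pm 1$ on the diagonal; (3) show term-by-term that every nonzero contribution to $(\mathbb D\mathbb B^{-1}\mathbb D)_{ij}$ is positive, using the parity constraint that a length-$k$ directed path forces $c(a_i) \equiv c(a_j) + k$; (4) conclude. One should also note the diagonal entries: a length-$0$ path gives $(\mathbb B^{-1})_{ii} = 1$, consistent with $\mathbb B^+$ being a bipartite adjacency matrix of a multigraph (no loops since only $k\ge 1$ contributes off-structure, and the diagonal of the bipartite adjacency matrix records the matching edges).

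**Anticipated obstacle.** The genuinely delicate point is the bookkeeping of signs and parities: making sure the $2$-coloring of the contracted digraph $D$ is the right object (as opposed to a $2$-coloring of $G$ itself), and that the parity of the path length matches the color difference in exactly the way needed so that the two sign sources — the $(-1)^k$ from the Neumann series and the $(-1)^{c(a_i)+c(a_j)}$ from conjugating by $\mathbb D$ — cancel rather than reinforce. Once this parity lemma is nailed down, the rest is immediate. A secondary technical nuisance is confirming that $D$ being bipartite is what lets us $2$-color it at all (it does, since bipartite is equivalent to $2$-colorable), and that this coloring is consistent on each connected component without any obstruction from the orientation.
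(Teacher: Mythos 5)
Your proposal is correct: with the paper's normalization $\mathbb B=I+N$, where $N$ is the (strictly lower triangular, hence nilpotent) adjacency matrix of the acyclic digraph $D=G/M$, the expansion $\mathbb B^{-1}=\sum_{k\ge 0}(-1)^kN^k$ is valid, $(N^k)_{ij}$ counts directed paths of length $k$, and when $D$ is bipartite the $2$-coloring forces every such path between a fixed pair to have length of the same parity, so conjugating by $\mathbb D$ with $(\mathbb D)_{ii}=(-1)^{c(a_i)}$ turns every contribution into $+1$; the only point to state explicitly is that rows and columns of $\mathbb B$ are indexed by the matched pairs, i.e.\ by $V(D)$, so one and the same $\mathbb D$ acts on both sides, which your construction does respect. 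Note, however, that the paper does not prove this statement at all: it quotes it as Godsil's theorem. Within the paper's own machinery the natural derivation would instead go through Theorem~\ref{cor:inverse} and Lemma~\ref{lem:diag}: the inverse $(G^{-1},w)$ has $w(ij)=\sum_P(-1)^{|E(P)\backslash M|}$ over $M$-alternating paths $P$, bipartiteness of $G/M$ forces all alternating paths joining a fixed pair to have the same parity of $|E(P)\backslash M|$ (so each edge of $G^{-1}$ has a sign dictated by the $2$-coloring), and the $2$-coloring then serves as the switching function showing $(G^{-1},w)$ is balanced, whence Lemma~\ref{lem:diag} applies. Your Neumann-series argument is the same combinatorial content phrased matrix-theoretically (directed paths in $D$ correspond bijectively to $M$-alternating paths in $G$, with $k=|E(P)\backslash M|$), is closer to Godsil's original proof, and has the advantage of being self-contained; the alternating-path/balance route has the advantage of fitting the framework the paper uses to prove the stronger Theorem~\ref{thm:main}.
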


Godsil's result was generalized to weighted bipartite graphs with unique perfect matchings by Panda and Pati in~\cite{PP}.
In this paper, we provide a solution to Problem~\ref{prob} as follows.

\begin{thm}\label{thm:main}
Let $G$ be a bipartite graph with a unique perfect matching $M$. Then $\mathbb B^{-1}$ is
diagonally similar to a non-negative matrix if and only if $G$ does not contain an odd flower as a subgraph.
\end{thm}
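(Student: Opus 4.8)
The plan is to reduce the statement to a balance condition on a signed graph built from $\mathbb B^{-1}$, and then to translate that condition, through the acyclic digraph $D=G/M$, into the presence or absence of an odd flower.

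\emph{Step 1: reformulation via balanced signed graphs.} Since $\mathbb B$ is lower triangular with unit diagonal, so is $\mathbb B^{-1}$, and the question of whether a $\pm 1$ diagonal matrix $\mathbb D$ with $\mathbb D\mathbb B^{-1}\mathbb D\ge 0$ exists is purely a sign-pattern question: such a $\mathbb D$ exists if and only if the signed graph $S(G)$ on the set of matched pairs, with an edge $ij$ for each nonzero off-diagonal entry $(\mathbb B^{-1})_{ij}$ carrying the sign of that entry, is balanced in Harary's sense, i.e. every cycle of $S(G)$ contains an even number of negative edges. I would isolate this as a lemma. So the problem becomes: $S(G)$ is unbalanced if and only if $G$ contains an odd flower.

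\emph{Step 2: making $\mathbb B^{-1}$ explicit, and recovering Godsil's theorem.} Write $\mathbb B=I+N$ where $N$ is the adjacency matrix of $D$; since $D$ is acyclic, $\mathbb B^{-1}=\sum_{k\ge 0}(-1)^kN^k$ is a finite sum and
\[ (\mathbb B^{-1})_{ij}=\sum_{P}(-1)^{|P|}, \]
the sum being over all directed paths $P$ from $i$ to $j$ in $D$. Thus the edge $ij$ lies in $S(G)$ exactly when this signed path count is nonzero, with sign equal to the sign of the count. When $D$ is bipartite — Godsil's hypothesis in Theorem~\ref{thm:Godsil} — all directed paths between a fixed ordered pair of matched pairs have the same length parity, so every such count has the sign predicted by a $2$-colouring of $D$, and one reads off a consistent $\mathbb D$; this reproves Theorem~\ref{thm:Godsil}. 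An obstruction to consistency must therefore involve an odd closed walk in $D$. However a single odd cycle in $D$ is never enough: for a directed triangle the count $-1+1$ cancels, and for two directed paths of lengths $2$ and $3$ the resulting signed graph, while containing an odd cycle, turns out to be balanced. An odd flower is meant to be exactly the configuration in $D$ — several paths of mixed length-parity joining a common pair of matched pairs, possibly carrying a stem — in which these cancellations are forced to fail.

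\emph{Step 3: the ``only if'' direction.} Given an odd flower $H\subseteq G$, I would compute the entries of $\mathbb B^{-1}$ indexed by $V(H)$ using the path-count formula and exhibit an explicit cycle of $S(G)$ with an odd number of negative edges. Two points need care. First, the entries of $\mathbb B^{-1}$ depend on all of $D$, not only on $D_H$, so the notion of odd flower must be robust: it should be set up (via an order-ideal/induced condition on the matched pairs it uses, or via closure under the relevant transitive arcs) so that the sign pattern on its vertices is unchanged by the rest of $G$. Second, one should calibrate the parity bookkeeping against the smallest flowers — e.g. two overlapping directed triangles, which already yields a negative $4$-cycle in $S(G)$.

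\emph{Step 4: the ``if'' direction, which I expect to be the main obstacle.} Suppose $S(G)$ is unbalanced and take a shortest negative cycle $C$. Each edge of $C$ corresponds to a comparable pair of matched pairs with nonzero signed path count, hence to a family of directed paths in $D$; concatenating these along $C$ yields a closed walk in $D$ which, by minimality of $C$, is ``thin'' — its segments meet only where forced. The aim is to distil an odd flower from this walk. I would argue by induction on $|V(G)|$: if some matched pair or some arc of $D$ is irrelevant to $C$, delete it, verifying that deletion neither destroys the negative cycle nor leaves the class of graphs under consideration, and apply induction; otherwise $D$ is essentially the support of $C$ together with the internal vertices of the path families, and a direct structural analysis of the up/down profile of the walk and the parities of its segments should identify an odd flower. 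The hard part is precisely this extraction: controlling how the several signed path counts along $C$ interact when paths share vertices, and ensuring the parity argument pins down a genuine flower rather than a larger, possibly balanced, configuration.
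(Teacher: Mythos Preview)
Your Steps~1 and~2 are exactly the paper's setup: Lemma~\ref{lem:diag} reduces the question to balance of the weighted inverse $(G^{-1},w)$, and Theorem~\ref{cor:inverse} (your formula $(\mathbb B^{-1})_{ij}=\sum_P(-1)^{|P|}$ via the acyclic digraph $D$) gives $w(ij)=\tau_e(i,j)-\tau_o(i,j)$. From here on, however, you have misread the definition of odd flower, and this makes you work much harder than necessary in both directions.

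An odd flower is \emph{not} a fixed local pattern such as ``several paths of mixed parity joining a common pair of matched pairs''. Re-read the definition: for a set $S=\{x_1,\dots,x_k\}\subseteq V(G)$, the span $\mathrm{Span}_M(S)$ is a flower when the $x_i$ can be cyclically ordered so that $\tau_o(x_i,x_j)\ne\tau_e(x_i,x_j)$ \emph{iff} $|i-j|\equiv 1\pmod k$, and it is odd when an odd number of consecutive pairs satisfy $\tau_o>\tau_e$. Since $\tau_o,\tau_e$ count \emph{all} $M$-alternating paths in $G$, the condition $\tau_o(x_i,x_j)\ne\tau_e(x_i,x_j)$ is literally $w(x_ix_j)\ne 0$, i.e.\ $x_ix_j\in E(G^{-1})$, and $\tau_o>\tau_e$ is literally $w(x_ix_j)<0$. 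Thus ``$\mathrm{Span}_M(S)$ is an odd flower'' is, by definition, the statement that $S$ induces a cycle in $G^{-1}$ with an odd number of negative edges. Your robustness worry in Step~3 evaporates: the counts are global from the outset, and an odd flower immediately yields a negative cycle in $(G^{-1},w)$.

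Consequently the ``hard'' direction in Step~4 is a two-line argument, not an induction. If $(G^{-1},w)$ is unbalanced, take a \emph{shortest} negative cycle $C=x_1\cdots x_kx_1$. A shortest negative cycle is chordless (a chord would split $C$ into two shorter cycles whose sign product is that of $C$, forcing one of them negative). Chordlessness says $w(x_ix_j)=0$, i.e.\ $\tau_o(x_i,x_j)=\tau_e(x_i,x_j)$, for all non-consecutive pairs; negativity of $C$ says an odd number of consecutive pairs have $\tau_o>\tau_e$. Hence $\mathrm{Span}_M(\{x_1,\dots,x_k\})$ is an odd flower. There is nothing to ``extract'': the definition was engineered so that a chordless negative cycle in $G^{-1}$ \emph{is} an odd flower.
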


To define odd flower, we need more notation. Let $G$ be a bipartite multigraph with a perfect matching $M$.
A path $P$ of $G$ is $M$-alternating if $E(P)\cap M$ is a perfect matching of $P$.
For two vertices $i$ and $j$ of $G$, let $\tau(i,j)$ be the number of $M$-alternating paths of $G$
joining $i$ and $j$. Further, let $\tau_o(i,j)$ be the number of $M$-alternating paths $P$ of
$G$  joining $i$ and $j$ such that $\big |E(P)\backslash M\big |$ is odd, 
and $\tau_e(i,j)$ be the number of
$M$-alternating paths $P$ joining $i$ and $j$ such that $\big |E(P)\backslash M\big |$ is even. 
For a subset $S=\{x_1,x_2,...x_k\}$ of $V(G)$, the {\em $M$-span} of $S$ is defined
as a subgraph of $G$ consisting of all $M$-alternating paths joining $x_i$ and $x_j$
for any $x_i,x_j\in S$, denoted by $\mbox{Span}_{M}(S)$.  An $M$-span is called
a {\em flower} if the vertices of $S$ can be ordered such that
$\tau_o(x_i,x_{j})\ne \tau_e(x_i, x_{j})$ if and only if $|i-j|\equiv 1$ (mod $k$).
A flower is {\em odd} if there is an odd number of vertex pairs  $\{x_i, x_j\}$
with $\tau_o(x_i , x_{j} )>\tau_e(x_i ,x_{j} )$. For  example, see Figure~\ref{fig:flower}.  In  Section 4, it will be shown that the existence of an odd flower means simply that the vertices in $S$ induce a cycle with an odd number of negative edges in the inverse of $G$.

\begin{figure}[!hbtp]\refstepcounter{figure}\label{fig:flower}
\begin{center}
\includegraphics[scale=1]{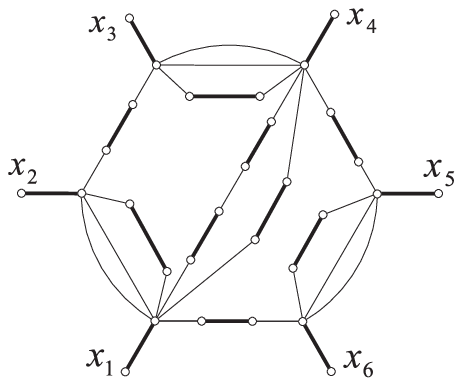}\\
{Figure \ref{fig:flower}: An odd flower: thick edges form a perfect matching $M$.}
\end{center}
\end{figure}


\section{Inverses of weighted graphs}

A weighted multigraph $(G,w)$ is a multigraph with a weight-function
$w: E(G)\to \mathbb F\backslash\{0\}$ where $\mathbb F$ is a field.
We always assume that a weighted
multigraph has no parallel edges since all parallel edges $e_1,...,e_k$ joining a pair of vertices
$i$ and $j$ can be replaced by one edge $ij$ with weight $w(ij)=\sum w(e_i)$.
The adjacency matrix of a weighted multigraph $(G,w)$, denoted
by $\mathbb A_{w}$, is defined as
\[
\mathbb (\mathbb A_{w})_{ij} :=\left\{
 \begin{array}{ll}
w(ij)    &\mbox{if $i j\in E(G)$;}\\
0  &\mbox{otherwise}
 \end{array}
 \right.
\]
where loops, with $w(ii)\ne 0$, are allowed.
A weighted
multigraph $(G,w)$ is {\em invertible over $\mathbb F$} if its adjacency matrix $\mathbb A_w$ is invertible over $\mathbb F$. Note that $\mathbb A_w$ is a symmetric matrix. Its inverse $\mathbb A_w^{-1}$ is also symmetric and therefore is the adjacency matrix of some weighted graph, which is called the inverse of $(G,w)$. The {\em inverse} of $(G,w)$ is defined as a weighted graph $(G^{-1}, w^{-1})$ whose vertex set is $V(G^{-1})=V(G)$ and whose edge set is $E(G^{-1})=\{ij \, |\,  (\mathbb A_w^{-1})_{ij}\ne 0\}$, and whose weight function is $w^{-1}(ij)=(\mathbb A_w^{-1})_{ij}$. Note that this definition of graph inverse is different from the definitions given in \cite{G} and \cite{MM}.


Let $G$ be a graph. A {\em Sachs subgraph} of $G$ is a spanning subgraph
with only copies of $K_2$ and cycles (including loops) as components. For example,
a perfect matching $M$ of $G$ is a Sachs subgraph.
For convenience, a Sachs subgraph is denoted by $S=\mathcal C\cup M $ where $\mathcal C$
consists of the cycles of $S$ (including loops), and $M$ consists of all components of $S$ isomorphic to $K_2$.
 The following result shows how to compute the determinant of the adjacency matrix of a graph.

\begin{thm}[Harary, \cite{H62}]\label{thm:Harary}
Let $G$ be a graph and $\mathbb A$ be the adjacency matrix of $G$. Then
\[\det(\mathbb A) = \sum_{S} 2^{|\mathcal C|} (-1)^{|\mathcal C|+|E(S)|},\]
where $S=\mathcal C\cup M$ is a Sachs subgraph.
\end{thm}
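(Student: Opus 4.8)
The statement to prove is Harary's determinant formula (Theorem~\ref{thm:Harary}), so let me sketch how I would prove it.

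\medskip

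The plan is to start from the Leibniz expansion of the determinant, $\det(\mathbb A)=\sum_{\sigma\in S_n}\operatorname{sgn}(\sigma)\prod_{i}(\mathbb A)_{i\sigma(i)}$, and then identify which permutations $\sigma$ contribute a nonzero term. A term is nonzero precisely when $(\mathbb A)_{i\sigma(i)}\ne 0$ for every $i$, i.e. when $i\sigma(i)$ is an edge (or $i=\sigma(i)$ is a loop) for every $i$. Decomposing $\sigma$ into disjoint cycles, this means every cycle of $\sigma$ must ``ride along'' edges of $G$: a fixed point corresponds to a loop at that vertex, a transposition $(i\,j)$ corresponds to the edge $ij$ traversed both ways, and a longer cycle $(i_1 i_2\cdots i_\ell)$ corresponds to a closed walk $i_1 i_2\cdots i_\ell i_1$ using only edges of $G$. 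The key observation is that such a permutation with nonzero contribution determines (and is essentially determined by) a Sachs subgraph: transpositions and loops give the $K_2$-components and loop-cycles, while cycles of length $\ell\ge 3$ give the honest cycles, and each undirected cycle of length $\ell\ge 3$ in $G$ can be traversed in two directions, contributing the factor $2^{|\mathcal C|}$.

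\medskip

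Next I would compute the sign. For a permutation that is a product of $|\mathcal C|$ cycles of lengths $\ell_1,\dots,\ell_{|\mathcal C|}$ (counting loops as $1$-cycles and $K_2$-edges as $2$-cycles) together with the transpositions, $\operatorname{sgn}(\sigma)=\prod_t(-1)^{\ell_t-1}=(-1)^{\sum(\ell_t-1)}$. Writing the Sachs subgraph as $S=\mathcal C\cup M$, the cycles in $\mathcal C$ of length $\ell_t$ contribute $\ell_t$ edges each, the $K_2$-components contribute $1$ edge each but correspond to $2$-cycles (sign $-1$ each, i.e.\ $(-1)^{2-1}$), and loops contribute $1$ edge and a $1$-cycle (sign $+1$). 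Bookkeeping: $\sum(\ell_t-1)$ over all cycles of $\sigma$ equals $\big(\sum_{\text{cycles in }\mathcal C}\ell_t\big) + \big(\text{number of loops}\big)\cdot 0 + |M| - (\text{number of non-loop cycles in }\mathcal C) - (\text{number of loops}) - |M|$ — I would organize this carefully so that it collapses to $|\mathcal C|+|E(S)|$ modulo $2$, matching the exponent in the statement. (Loops need a small separate check since they are $1$-cycles with one edge; the parity $|\mathcal C|+|E(S)|$ still works because a loop adds $1$ to $|\mathcal C|$ and $1$ to $|E(S)|$.) Finally, since $\mathbb A$ is a $0$/$1$ matrix (simple graph), $\prod_i(\mathbb A)_{i\sigma(i)}=1$ for every contributing $\sigma$, so summing over all $\sigma$ and grouping by the underlying Sachs subgraph $S$ gives exactly $\sum_S 2^{|\mathcal C|}(-1)^{|\mathcal C|+|E(S)|}$.

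\medskip

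The main obstacle is the sign bookkeeping: one must be scrupulous about the two conventions at play — counting a $K_2$-component as a $2$-cycle versus an edge, and counting a loop as a $1$-cycle with one edge — and verify that in every case the parity of the Leibniz sign matches the parity of $|\mathcal C|+|E(S)|$. A clean way to handle this is to note that for a single cycle of $\sigma$ of length $\ell$ the sign is $(-1)^{\ell-1}$, so the total sign is $(-1)^{n-(\#\text{cycles of }\sigma)}$; then translate $\#\text{cycles of }\sigma$ and $n$ into data of $S$: each non-loop cycle in $\mathcal C$ of length $\ell$ uses $\ell$ vertices and is $1$ cycle of $\sigma$; each loop is $1$ vertex and $1$ cycle; each $K_2$ is $2$ vertices and $1$ cycle of $\sigma$. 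Hence $n-\#\text{cycles}=\sum_{\mathcal C,\,\ell\ge 3}(\ell-1)+0\cdot(\#\text{loops})+1\cdot|M|$, and one checks this is congruent mod $2$ to $|\mathcal C|+|E(S)|$ by writing $|E(S)|=\sum_{\mathcal C}\ell_t+|M|$ (loops contributing $\ell_t=1$) and $|\mathcal C|=\#\text{cycles in }\mathcal C$ including loops. The other, lesser, point to get right is the multiplicity factor $2^{|\mathcal C|}$: a $1$-cycle (loop) and a $2$-cycle ($K_2$) each admit only one ``orientation'' while a cycle of length $\ge 3$ admits exactly two, and this is precisely why only the cycles in $\mathcal C$ — not the edges of $M$ — contribute to the exponent of $2$.
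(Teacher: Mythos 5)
Your proposal is correct: the paper states this result as Harary's theorem with a citation to \cite{H62} and gives no proof of its own, and your Leibniz-expansion argument (contributing permutations $\leftrightarrow$ Sachs subgraphs with an orientation of each cycle, sign $(-1)^{n-\#\mathrm{cycles}}\equiv(-1)^{|\mathcal C|+|E(S)|}$, factor $2^{|\mathcal C|}$ from the two traversals of each cycle) is exactly the standard proof, and is the same style of permutation-cycle bookkeeping the paper itself uses later in proving Theorem~\ref{thm:inverse}. Your parenthetical care about loops is fine and harmless here, since the theorem is stated for simple graphs where $\mathcal C$ contains no loops, so the multiplicity $2^{|\mathcal C|}$ is exact.
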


If $G$ is a bipartite graph with a Sachs subgraph $S=\mathcal C\cup M$, then every cycle $C$
in $\mathcal C$ is of even size and hence its edge set can be decomposed into
two disjoint perfect matchings of $C$. Therefore, $G$ has at least $2^{|\mathcal C|}$ perfect matchings.
So if $G$ is a bipartite graph with a unique perfect matching $M$, then $M$ is the unique Sachs subgraph of $G$. Hence we have the following corollary of the above result, which can also be derived easily from a result of Godsil (Lemma 2.1 in \cite{G}).

\begin{cor}\label{thm:det}
Let $G$ be a bipartite graph with a unique perfect matching $M$. Then
\[\det(\mathbb A) = (-1)^{|M|},\]
where $\mathbb A$ is the adjacency matrix of $G$.
\end{cor}

By Corollary~\ref{thm:det}, the determinant of the adjacency matrix of  a bipartite graph $G$ with a unique perfect matching
is either 1 or $-1$.
So a bipartite graph with a unique perfect matching is always invertible.
The inverse of a graph can be characterized in terms of its Sachs subgraphs as shown in the following theorem, which was originally proved in \cite{YYMK}. However, to make the paper self-contained, we include the proof here as
well.




\begin{thm}[\cite{YYMK}]\label{thm:inverse}
Let $G$ be a graph with adjacency matrix $\mathbb A$, and
\[\mathcal P_{ij}=\{P| P \mbox{ is a path joining } i \mbox{ and } j\ne i \mbox{ such
that } G\backslash V(P) \mbox{ has   a Sachs subgraph } S\}.\]
If $G$ has an inverse $(G^{-1}, w)$, then
\[w(ij)=\left \{
 \begin{array}{ll}
\displaystyle \frac{1}{\det (\mathbb A )}  \sum_{P\in \mathcal P_{ij}} \sum_{S}2^{|\mathcal C|}(-1)^{|\mathcal C|+|E(S)\cup E(P)|} &\mbox{if } i\ne j; \\
\displaystyle \frac{1}{\det (\mathbb A )}\det(\mathbb A_{i,i})&\mbox{otherwise}
\end{array} \right .\]
where $S=\mathcal C\cup M$ is a Sachs subgraph of $G\backslash V(P)$ and $\mathbb A_{i,i}$ is the matrix obtained by deleting $i$-th row and $i$-th column from $\mathbb A$.
\end{thm}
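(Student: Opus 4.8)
The plan is to compute the entries of $\mathbb A^{-1}$ directly via the cofactor (adjugate) formula $(\mathbb A^{-1})_{ij} = (-1)^{i+j}\det(\mathbb A_{j,i})/\det(\mathbb A)$, where $\mathbb A_{j,i}$ is $\mathbb A$ with row $j$ and column $i$ deleted, and then translate each minor into a sum over Sachs subgraphs using a permutation-expansion argument in the spirit of Harary's theorem (Theorem~\ref{thm:Harary}). For the diagonal case $i=j$ there is nothing to do: the cofactor formula already gives $w(ii)=\det(\mathbb A_{i,i})/\det(\mathbb A)$. So the work is entirely in the off-diagonal case.

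For $i\ne j$, I would expand $\det(\mathbb A_{j,i})$ as a signed sum over bijections $\sigma$ from $V(G)\setminus\{j\}$ to $V(G)\setminus\{i\}$, keeping only the terms in which every used matrix entry is nonzero (i.e. corresponds to an edge of $G$). The combinatorial structure of such a $\sigma$ is standard: following the images $i\mapsto\sigma^{-1}\cdots$, one sees that $\sigma$ decomposes the vertex set into a single path $P$ from $i$ to $j$ (coming from the "defect" caused by deleting different row and column indices) together with a permutation of the remaining vertices $V(G)\setminus V(P)$ whose cycle structure — once the symmetry $(\mathbb A)_{uv}=(\mathbb A)_{vu}$ is used to pair up transposed $2$-cycles and orient longer cycles — is exactly a Sachs subgraph $S=\mathcal C\cup M$ of $G\setminus V(P)$. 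Collecting signs is the bookkeeping heart of the argument: the sign $(-1)^{i+j}$ from the cofactor, the sign of the permutation $\sigma$, and the factor $2^{|\mathcal C|}$ (each even cycle in $\mathcal C$ arising from two oppositely oriented cyclic permutations, each loop from one) must be shown to combine into the claimed $2^{|\mathcal C|}(-1)^{|\mathcal C|+|E(S)\cup E(P)|}$. Here one uses that $E(S)$ and $E(P)$ are edge-disjoint (since $P$ and $S$ live on disjoint vertex sets), so $|E(S)\cup E(P)| = |E(S)|+|E(P)|$, and that the parity of a permutation built from a path on $\ell+1$ vertices and disjoint cycles is governed by $\ell$ (the number of edges of $P$) plus the usual cycle count.

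The main obstacle is the sign computation, specifically reconciling the positional sign $(-1)^{i+j}$ of the cofactor with the sign of the permutation $\sigma$ acting between two different ground sets $V(G)\setminus\{j\}$ and $V(G)\setminus\{i\}$. The clean way to handle this is to pass to a genuine permutation of a single set: compose $\sigma$ with the "shift" bijection that relabels $V(G)\setminus\{i\}$ back to $V(G)\setminus\{j\}$, whose sign is exactly $(-1)^{i+j-1}$ or $(-1)^{i+j}$ depending on whether $i<j$; this converts the data into a permutation of $V(G)\setminus\{j\}$ with a distinguished path from $i$ to $j$, after which the parity is just $|E(P)| + (\text{number of nontrivial cycles})$ by the standard formula, and the stray signs cancel against $(-1)^{|\mathcal C|}$ and the contribution of $|E(P)|$. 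Once the sign identity is verified on a single term, summing over all admissible $\sigma$ groups the terms first by the induced path $P$ (ranging over $\mathcal P_{ij}$, since a nonzero contribution forces $G\setminus V(P)$ to have a Sachs subgraph) and then, for fixed $P$, by the Sachs subgraph $S$ of $G\setminus V(P)$, yielding exactly the stated double sum; dividing by $\det(\mathbb A)$ finishes the proof.
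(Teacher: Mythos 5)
Your proposal is correct and follows essentially the same route as the paper: expand the cofactor $\det(\mathbb A_{j,i})$ (the paper packages it as $\det(\mathbb M_{i,j})$, the bordered matrix with a $1$ in position $(i,j)$, expanded over permutations $\pi$ with $\pi(i)=j$) via the Leibniz formula, read off from each nonzero term a path $P$ from $i$ to $j$ plus a Sachs-subgraph structure on $G\backslash V(P)$ handled by Theorem~\ref{thm:Harary}, and check that the signs combine to $2^{|\mathcal C|}(-1)^{|\mathcal C|+|E(S)\cup E(P)|}$. Your shift-bijection device for absorbing the positional sign $(-1)^{i+j}$ is just a different bookkeeping implementation of the paper's $\mathbb M_{i,j}$ trick, so no substantive difference or gap.
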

\begin{proof}
Let $G$ be an invertible graph and $(G^{-1},w)$ be its inverse. Assume $G$ has $n$ vertices and $V(G)=\{1,2,...,n\}$.
By the definition of the inverse of a graph, $w(ij)=(\mathbb A^{-1})_{ij}$.

Note that $\mathbb A$ is symmetric and hence $\mathbb A^{-1}$ is also symmetric. By Cramer's rule,
\[(\mathbb A^{-1})_{ij}=(\mathbb A^{-1})_{ji}=\frac{ c_{ij} }{\det(\mathbb A)}\]
where $ c_{ij}=(-1)^{i+j} \det(\mathbb A_{i,j})$ where $\mathbb A_{i,j}$ is the matrix obtained from $\mathbb A$ by deleting $i$-th row and $j$-th column.
Let $\mathbb M_{i,j}$ be the matrix obtained from $\mathbb A$ by replacing  the $(i,j)$-entry
by 1 and all other entries
in the $i$-th row and $j$-th column by 0. Then by the Laplace expansion, $c_{ij}=\det(\mathbb M_{i,j})$

If $i=j$, then $\det(\mathbb M_{i,i})=\det(\mathbb A_{i,i})$. So $\displaystyle w(ii)=(\mathbb A^{-1})_{ii} = \frac{c_{ii}} {\det(\mathbb A)}=\frac{\det(\mathbb M_{i,i})}{\det(\mathbb A)}=\frac{\det(\mathbb A_{i,i})}{\det(\mathbb A)}$. So the theorem holds for $i=j$.
In the following, assume that $i\ne j$.

Let $m_{kl}$ be the $(k,l)$-entry of $\mathbb M_{i,j}$. Recall that the Leibniz formula for the determinant of $\mathbb M_{i,j}$ is
\[\det(\mathbb M_{i,j})=\sum_{\pi \in S_n} \mbox{sgn} (\pi) \prod m_{k\pi (k)},\]
where the sum is computed over all permutations $\pi$ of the set $V(G):=\{1,2,...,n\}$. Since all $(i,l)$-entries ($l\ne j$) of $\mathbb M_{i,j}$
are equal to 0
but the $(i,j)$-entry is 1, only permutations $\pi$ such that $\pi(i)=j$ contribute
to the the determinant of $\mathbb M_{i,j}$.
Let $\Pi_{i\to j}$ be the family of  all permutations on $V(G)=\{1,2,...,n\}$ such that $\pi(i)=j$.
Denote the cycle of $\pi$ permuting $i$ to $j$ by $\pi_{ij}$. For convenience, $\pi_{ij}$ is also
used to denote the set of vertices which corresponds to the elements in the permutation
cycle $\pi_{ij}$, for example, $V(G)\backslash \pi_{ij}$
denotes the set of vertices in $V(G)$ but not in $\pi_{ij}$. Denote the permutation of $\pi$ restricted on $V(G)\backslash \pi_{ij}$ by  $\pi \backslash \pi_{ij}$.
Then
\begin{align*}
\det(\mathbb M_{i, j})
&=\sum_{\pi \in \Pi_{i\to j} } \mbox{sgn}(\pi)\prod_{k\in V(G)\backslash \{i\} } m_{k\pi(k)}\\
&=\sum_{\pi \in \Pi_{i\to j}}\big (\mbox{sgn}(\pi_{ij}) \prod_{k\in  \pi_{ij} \backslash \{i\}} m_{k\pi(k)}\big ) \
\big (\mbox{sgn}(\pi \backslash \pi_{ij})\prod_{k\in V(G)\backslash \pi_{ij}} m_{k\pi(k)} \big ).
\end{align*}
By the definition of $\mathbb M_{i,j}$, if  $k\ne i$ or $l\ne j$, then $m_{kl}=(\mathbb A)_{kl}$, the $(k,l)$-entry of $\mathbb A$.

If the permutation cycle $\pi_{ij}$ does not correspond to a cycle of $G$, then for some $k\in \pi_{ij}$, $k\pi(k)$ is not an edge of $G$ and hence $m_{k\pi(k)}=0$. So $\mbox{sgn}(\pi_{ij})\prod_{k\in \pi_{ij}\backslash \{i\}} m_{k\pi(k)}=0$.
If the permutation cycle $\pi_{ij}$ does correspond to a cycle in the graph $G$, let $P$ be the path from $j$ to $i$ following the permutation order in $\pi_{ij}$. Then  $\mbox{sgn}(\pi_{ij})\prod_{k\in \pi_{ij}\backslash \{i\}} m_{k\pi(k)}=(-1)^{|E(P)|}$.
Note that $\mbox{sgn}(\pi\backslash \pi_{ij})\prod_{k\in V(G)\backslash \pi} m_{k \pi(k)}$ is the determinant of the adjacency matrix of the graph $G\backslash V(P)$. By Theorem~\ref{thm:Harary}, it follows that
\[\mbox{sgn}(\pi\backslash \pi_{ij})\prod_{k\in V(G)\backslash \pi} m_{k \pi(k)}=  \sum_{S} 2^{|\mathcal C|} (-1)^{|\mathcal C|+|E(S)|},\]
where $S=\mathcal C\cup M$ is a Sachs subgraph of $G\backslash V(P)$. For the case that $G\backslash V(P)$ has no Sachs subgraphs, then $\big (\mbox{sgn}(\pi_{ij}) \prod_{k\in  \pi_{ij} \backslash \{i\}} m_{k,\pi(k)}\big ) \
\big (\mbox{sgn}(\pi \backslash \pi_{ij})\prod_{k\in V(G)\backslash \pi_{ij}} m_{k,\pi(k)} \big )=0$.
Hence,
\[
\det(\mathbb M_{i,j})=\sum_{P\in \mathcal P_{ij}} (-1)^{|E(P)|} \big(\sum_{S} 2^{|\mathcal C|}(-1)^{|\mathcal C|+|E(S)|}\big )=\sum_{P\in \mathcal P_{ij}} \sum_{S}  2^{|\mathcal C|}(-1)^{|\mathcal C|+|E(S)\cup E(P)|},
\] where $S=\mathcal C\cup M$ is a Sachs subgraph of $G\backslash V(P)$. The theorem follows immediately from $\displaystyle w(ij)=\frac{\det (\mathbb M_{i,j})}{\det(\mathbb A)}$.
This completes the proof.
\end{proof}

For a bipartite graph $G$ with a unique perfect matching, the weight function of its inverse $(G^{-1},w)$
can be simplified as shown below.

\begin{thm}\label{cor:inverse}
Let $G$ be a bipartite graph with a unique perfect matching $M$, and let
\[\mathcal P_{ij}=\{P| P \mbox{ is an $M$-alternating path joining } i \mbox{ and } j\}.\]
Then $G$ has an inverse $(G^{-1}, w )$ such that
\[w (ij)=\left \{
 \begin{array}{ll}
\displaystyle \sum_{P\in \mathcal P_{ij}}  (-1)^{|E(P)\backslash M|}  &\mbox{if } i\ne j; \\
\displaystyle 0&\mbox{otherwise.}
\end{array} \right .\]
\end{thm}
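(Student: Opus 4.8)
The plan is to read Theorem~\ref{cor:inverse} off from Theorem~\ref{thm:inverse}, exploiting how strongly the hypothesis ``bipartite with a unique perfect matching'' constrains the paths and the Sachs subgraphs occurring in that formula. Since $\det(\mathbb A)=(-1)^{|M|}\ne 0$ by Corollary~\ref{thm:det}, the matrix $\mathbb A$ is invertible, so $G$ has an inverse $(G^{-1},w)$ to which Theorem~\ref{thm:inverse} applies; moreover the prefactor $1/\det(\mathbb A)$ there equals $(-1)^{|M|}$, because $(-1)^{-|M|}=(-1)^{|M|}$.

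The heart of the argument is to identify, for $i\ne j$, the index set $\mathcal P_{ij}$ of Theorem~\ref{thm:inverse} with the set of $M$-alternating $i$--$j$ paths, and to pin down the Sachs subgraph attached to each one. I would argue as follows. Suppose $P$ is an $i$--$j$ path such that $G\backslash V(P)$ has a Sachs subgraph $S=\mathcal C\cup M'$. Since $G\backslash V(P)$ is bipartite, every cycle in $\mathcal C$ is even and splits into two perfect matchings, so $G\backslash V(P)$ has at least $2^{|\mathcal C|}\ge 1$ perfect matchings; in particular $|V(G\backslash V(P))|$ is even, and since $|V(G)|$ is even (it has a perfect matching), $|V(P)|$ is even too. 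A path on an even number of vertices has a unique perfect matching $M_P$, and for any perfect matching $M''$ of $G\backslash V(P)$ the union $M_P\cup M''$ is a perfect matching of $G$; by uniqueness it equals $M$. Hence $M_P\subseteq M$, so $M_P=M\cap E(P)$ is a perfect matching of $P$ --- that is, $P$ is $M$-alternating --- and $M\backslash V(P)$ has a \emph{unique} perfect matching, namely $M'=M\backslash(M\cap E(P))$, forcing $2^{|\mathcal C|}\le 1$, i.e. $\mathcal C=\emptyset$ (equivalently, use the remark preceding Corollary~\ref{thm:det}). Conversely, if $P$ is $M$-alternating, then $M\backslash(M\cap E(P))$ is visibly a Sachs subgraph of $G\backslash V(P)$. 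So the two descriptions of $\mathcal P_{ij}$ agree, and the inner sum over $S$ in Theorem~\ref{thm:inverse} collapses to a single term.

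It then remains to compute the parity. As $E(S)$ is supported on $V(G)\backslash V(P)$ and $E(P)$ on $V(P)$, these edge sets are disjoint, so
\[|E(S)\cup E(P)|=|E(S)|+|E(P)|=\big(|M|-|M\cap E(P)|\big)+\big(|M\cap E(P)|+|E(P)\backslash M|\big)=|M|+|E(P)\backslash M|,\]
while $|\mathcal C|=0$ and $2^{|\mathcal C|}=1$. Substituting this and $\det(\mathbb A)=(-1)^{|M|}$ into Theorem~\ref{thm:inverse} gives
\[w(ij)=(-1)^{|M|}\sum_{P\in\mathcal P_{ij}}(-1)^{|M|+|E(P)\backslash M|}=\sum_{P\in\mathcal P_{ij}}(-1)^{|E(P)\backslash M|}.\]
For the diagonal entry $i=j$: since $G$ has a perfect matching, $|V(G)|$ is even, so $G\backslash i$ has an odd number of vertices; being bipartite it has no Sachs subgraph, hence $\det(\mathbb A_{i,i})=0$ by Theorem~\ref{thm:Harary} and $w(ii)=0$.

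I expect the main obstacle to be the two-way equivalence in the second paragraph: showing that the bare existence of a Sachs subgraph of $G\backslash V(P)$ already forces $P$ to be $M$-alternating, and forces that Sachs subgraph to be cycle-free and unique. The careful bookkeeping of $M_P=M\cap E(P)$ and the propagation of uniqueness of the perfect matching from $G$ to $G\backslash V(P)$ is where the bipartite hypothesis really does the work; everything afterward is substitution of identities already in the paper.
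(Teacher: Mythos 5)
Your proposal is correct and follows essentially the same route as the paper: apply Theorem~\ref{thm:inverse}, prove that $G\backslash V(P)$ has a Sachs subgraph if and only if $P$ is $M$-alternating (using bipartiteness and the uniqueness of $M$), observe that the unique Sachs subgraph is then $M\backslash E(P)$ with no cycles, and compute the sign $|E(S)\cup E(P)|=|M|+|E(P)\backslash M|$, plus the same vanishing argument on the diagonal. Your extra justification that $\mathcal C=\emptyset$ via $2^{|\mathcal C|}\le 1$ only makes explicit a step the paper states without detail.
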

\begin{proof}
Let $G$ be a bipartite graph with a unique perfect matching $M$. By Corollary~\ref{thm:det},
$G$ has an inverse which is a weighted graph $(G^{-1}, w)$.

For any two vertices $i$ and $j$, let $P$ be a path joining $i$ and $j$. \medskip

{\bf Claim:} {\sl $G\backslash V(P)$ has a Sachs subgraph if and only if $P$ is an $M$-alternating path.} \medskip

{\em Proof of Claim:} If $P$  is an $M$-alternating path, then $G\backslash V(P)$ has a perfect matching.
So $G\backslash V(P)$ has a Sachs subgraph.

Now assume that $G\backslash V(P)$ has a Sachs subgraph.
Note that $G\backslash V(P)$ is a bipartite graph. Every cycle of a Sachs subgraph of $G\backslash V(P)$ is of even size.
So $G\backslash V(P)$ has a perfect matching $M'$.  Therefore,
$P$ is a path with even number of vertices and has a perfect matching $M''$. Hence
$M'\cup M''$ is a perfect matching of $G$. Since $G$ has a unique perfect matching, it follows that $M=M'\cup M''$.
So $P$ is an $M$-alternating path. This completes the proof of Claim.
\medskip

Let $P$ be a path in  $\mathcal P_{ij}$. Then $G\backslash V(P)$ has a unique perfect matching $M\backslash E(P)$, which is also its
unique Sachs subgraph.
By Claim and Theorem~\ref{thm:inverse}, for $i\ne j$, we have
\[w(ij)=(-1)^{|M|}\sum_{P\in \mathcal P_{ij}} (-1)^{|(M\backslash E(P))\cup E(P)|}=\sum_{P\in \mathcal P_{ij}} (-1)^{|E(P)\backslash M|}.\]
If $i=j$, then $G\backslash \{i\}$ has no perfect matching and hence no Sachs subgraph. By Theorem~\ref{thm:Harary},  $\det (\mathbb A_{i,i})=0$. By Theorem~\ref{thm:inverse}, it follows that
$w(ii)=0$. This completes the proof.
\end{proof}



\section{Balanced weighted graphs}

 Let $(G,w)$ be a weighted graph.
 An edge $ij$ of a weighted graph $(G,w)$ is {\em positive} if $w(ij)>0$ and {\em negative} if  $w(ij)<0$.
A cycle $C$ of $(G,w)$ is {\em negative} if $w(C)=\prod\limits_{ij\in E(C)}w(ij)<0$.
A {\em signed graph} $(G,\sigma)$ is a special weighted graph with a weight function
$\sigma: E(G)\to \{-1,+1\}$, where $\sigma$ is called the {\em signature} of $G$ (see \cite{H53}).
Signed graphs are well-studied combinatorial structures due to their applications in
combinatorics, geometry and matroid theory (cf. \cite{Z93, Z13}).

A {\em switching function} of a  weighted graph $(G,w)$ is a function $\zeta: V(G) \to \{-1,+1\}$,
and the switched weight-function of $w$ defined by $\zeta$ is
$w^{\zeta}(ij):=\zeta(i)w(ij)\zeta(j)$.
Two weight-functions $w_1$ and $w_2$ of a graph $G$
are {\em equivalent} to each other if there exists a switching function
$\zeta$ such that $w_1=w_2^{\zeta}$.
A weighted graph $(G,w)$ is {\em balanced} if there exists a switching function $\zeta$
such that $w^{\zeta}(ij)>0$ for any edge $ij\in E(G)$.
The following is a characterization of balanced signed graphs
obtained by Harary~\cite{H53}.

\begin{prop}[\cite{H53}]
Let $(G,\sigma)$ be a signed graph. Then $(G,\sigma)$ is balanced if and only if $V(G)$ has a bipartition
$V_1$ and $V_2$ such that $E(V_1,V_2)=\{e \ | e\in E(G) \mbox{ and }\sigma(e)=-1\}$.
\end{prop}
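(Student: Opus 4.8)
The plan is to use the obvious correspondence between switching functions $\zeta\colon V(G)\to\{-1,+1\}$ and ordered bipartitions $(V_1,V_2)$ of $V(G)$, namely $V_1=\zeta^{-1}(+1)$ and $V_2=\zeta^{-1}(-1)$, where either class is allowed to be empty. The single observation driving everything is that for any edge $ij\in E(G)$ one has $\zeta(i)\zeta(j)=-1$ exactly when $ij\in E(V_1,V_2)$, and $\zeta(i)\zeta(j)=+1$ otherwise. Since $\sigma^{\zeta}(ij)=\zeta(i)\sigma(ij)\zeta(j)$ and all quantities lie in $\{-1,+1\}$, the condition $\sigma^{\zeta}(ij)>0$ (i.e. $\sigma^{\zeta}(ij)=+1$) holds precisely when either $ij\notin E(V_1,V_2)$ and $\sigma(ij)=+1$, or $ij\in E(V_1,V_2)$ and $\sigma(ij)=-1$.

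For the forward implication, suppose $(G,\sigma)$ is balanced and let $\zeta$ be a switching function with $\sigma^{\zeta}(ij)>0$ for every edge. Put $V_1=\zeta^{-1}(+1)$ and $V_2=\zeta^{-1}(-1)$. By the observation, $\sigma^{\zeta}\equiv+1$ forces $\sigma(ij)=-1$ exactly for those edges $ij$ lying in $E(V_1,V_2)$; that is, $E(V_1,V_2)=\{e\in E(G)\mid\sigma(e)=-1\}$, which is the desired bipartition.

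For the converse, suppose $V(G)=V_1\cup V_2$ is a bipartition with $E(V_1,V_2)$ equal to the set of negative edges. Define $\zeta(v)=+1$ for $v\in V_1$ and $\zeta(v)=-1$ for $v\in V_2$. If an edge $ij$ has both ends in the same class, then $\zeta(i)\zeta(j)=+1$ and $ij\notin E(V_1,V_2)$, so $\sigma(ij)=+1$ and hence $\sigma^{\zeta}(ij)=+1$. If $ij\in E(V_1,V_2)$, then $\zeta(i)\zeta(j)=-1$ and $\sigma(ij)=-1$, so $\sigma^{\zeta}(ij)=(-1)(-1)=+1$. Thus $\sigma^{\zeta}$ is everywhere positive and $(G,\sigma)$ is balanced.

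I do not expect any genuine obstacle here: the statement is essentially a repackaging of the definitions of ``balanced'' and ``switching,'' with the only content being the sign bookkeeping $\zeta(i)\zeta(j)=-1\iff ij\in E(V_1,V_2)$. The single point worth stating carefully is the degenerate case in which $G$ has no negative edge (or, dually, all edges are negative and $G$ is bipartite in the classical sense), which is accommodated simply by allowing $V_1$ or $V_2$ to be empty.
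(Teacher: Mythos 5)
Your proof is correct. The paper itself states this proposition without proof, simply citing Harary, so there is no in-paper argument to compare against; but given the definition of ``balanced'' used here (existence of a switching function $\zeta$ with $\sigma^{\zeta}>0$ on every edge), your argument is exactly the intended one: the bijection $\zeta\leftrightarrow(V_1,V_2)$ with $V_1=\zeta^{-1}(+1)$, $V_2=\zeta^{-1}(-1)$, together with the observation that $\zeta(i)\zeta(j)=-1$ precisely when $ij\in E(V_1,V_2)$, and both directions follow by the case check you carried out. It is worth noting that in Harary's original paper balance is defined by requiring every cycle to have positive sign product, and under that definition the bipartition criterion is a genuine theorem (the content the present paper recovers separately as Corollary~\ref{cor:negativecycle}); under the switching-function definition adopted here, the proposition reduces to the sign bookkeeping you gave, including the harmless degenerate case of an empty part.
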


For a weighted graph $(G,w)$, define a signed graph $(G,\sigma)$ such that
$\sigma(ij)w(ij)>0$ for any edge $ij\in E(G)$. Then $(G,w)$ is balanced if
and only if $(G,\sigma)$ is balanced. Therefore,
the above result can be easily extended to weighted graphs $(G,w)$ as follows.

\begin{prop}\label{thm:balance}
Let $(G,w)$ be a weighted graph. Then $(G,w)$ is balanced if and only if $V(G)$ has a bipartition
$V_1$ and $V_2$ such that $E(V_1,V_2)=\{e \ | e\in E(G) \mbox{ and } w(e)<0\}$.
\end{prop}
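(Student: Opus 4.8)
The plan is to reduce Proposition~\ref{thm:balance} to Harary's characterization of balanced signed graphs (the preceding Proposition), using the auxiliary signed graph $(G,\sigma)$ defined by $\sigma(ij)w(ij)>0$ for every edge $ij\in E(G)$; that is, $\sigma(ij)=+1$ exactly when $w(ij)>0$ and $\sigma(ij)=-1$ exactly when $w(ij)<0$. So first I would record the elementary observation that for any switching function $\zeta$ and any edge $ij$, the signs of $w^{\zeta}(ij)=\zeta(i)w(ij)\zeta(j)$ and $\sigma^{\zeta}(ij)=\zeta(i)\sigma(ij)\zeta(j)$ agree, since $\zeta(i),\zeta(j)\in\{-1,+1\}$ do not change the sign of the real number $w(ij)$ beyond the sign pattern already captured by $\sigma$. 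Consequently $w^{\zeta}(ij)>0$ for all edges if and only if $\sigma^{\zeta}(ij)=+1$ for all edges, i.e. $(G,w)$ is balanced if and only if $(G,\sigma)$ is balanced.

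With that equivalence in hand, the second step is to invoke Harary's Proposition (the displayed result attributed to \cite{H53}): $(G,\sigma)$ is balanced if and only if $V(G)$ admits a bipartition $V_1,V_2$ with $E(V_1,V_2)=\{e\in E(G): \sigma(e)=-1\}$. The third step is purely a matter of translating the condition on $\sigma$ back to a condition on $w$: by the defining property of $\sigma$, the edge set $\{e\in E(G):\sigma(e)=-1\}$ is literally the same set as $\{e\in E(G):w(e)<0\}$. Substituting this identity into the statement of Harary's Proposition yields exactly the claimed characterization for $(G,w)$, and the proof is complete.

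I do not anticipate a genuine obstacle here; the statement is essentially a definitional extension of Harary's theorem, and the authors themselves flag it as following ``easily.'' The only point that deserves a line of care is the first step — verifying that passing to the associated signed graph does not change the balanced/unbalanced status — and in particular making sure the correspondence $w\leftrightarrow\sigma$ is well defined (it is, since $w$ never takes the value $0$ on an edge of $(G,w)$, as weighted graphs are assumed to have $w:E(G)\to\mathbb F\setminus\{0\}$, and here $\mathbb F=\mathbb R$ so ``positive'' and ``negative'' partition the edge set). Everything else is substitution.
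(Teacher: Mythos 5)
Your proposal is correct and follows the same route as the paper: the authors also pass to the associated signed graph $(G,\sigma)$ with $\sigma(ij)w(ij)>0$, note that $(G,w)$ is balanced exactly when $(G,\sigma)$ is, and then apply Harary's characterization. Your write-up just makes the sign-preservation under switching explicit, which the paper leaves implicit.
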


\noindent{\bf Remark.} Let $(G,w)$ be a weighted graph such that $G$ is connected, and
let $E^+:=\{e\ |\ w(e)>0\}$. Let $G/E^+$ be the graph obtained from $G$ by contracting
all edges in $E^+$ and deleting all loops. Then by Theorem~\ref{thm:balance},
$(G,w)$ is balanced if and only if $G/E^+$ is a bipartite multigraph.
Therefore, it takes $O(m)$ steps to determine
whether a weighted graph is balanced or not, where $m$ is the total number of edges of $G$.

\medskip

A direct corollary of the above theorem is the following result.

\begin{cor}\label{cor:negativecycle}
Let $(G,w)$ be a weighted graph. Then $(G,w)$ is balanced if and only if it does
not contain a negative cycle.
\end{cor}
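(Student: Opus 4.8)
The plan is to deduce Corollary~\ref{cor:negativecycle} directly from Proposition~\ref{thm:balance}, which already reduces balancedness to the existence of a bipartition $V_1, V_2$ of $V(G)$ whose crossing edges are exactly the negative edges. The forward direction is easy: if $(G,w)$ is balanced, fix a switching function $\zeta$ with $w^{\zeta}(e)>0$ for every $e\in E(G)$; then for any cycle $C$ we have $w(C)=\prod_{ij\in E(C)} w(ij) = \prod_{ij\in E(C)} \zeta(i) w^\zeta(ij) \zeta(j)$, and since every vertex on $C$ appears in exactly two edges of $C$, all the $\zeta$-factors cancel in pairs, giving $w(C)=\prod_{ij\in E(C)} w^{\zeta}(ij) > 0$. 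Hence no cycle is negative.

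For the converse, I would argue the contrapositive: assume $(G,w)$ is not balanced and produce a negative cycle. First reduce to the case where $G$ is connected, since a weighted graph is balanced iff each of its components is, and it contains a negative cycle iff some component does. On a connected $G$, pick a spanning tree $T$ and define a switching function $\zeta$ by choosing $\zeta$ on a root arbitrarily and propagating along $T$ so that every tree edge becomes positive after switching (concretely, $\zeta(v)$ is determined by the sign of the product of weights along the tree path from the root to $v$). Since $(G,w)$ is not balanced, by Proposition~\ref{thm:balance} no bipartition realizes the negative edges as the crossing set; equivalently, the switched weight $w^\zeta$ still has a negative edge $e=ij$, and $e\notin E(T)$ since all tree edges are positive under $w^\zeta$. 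Then the fundamental cycle $C$ of $e$ with respect to $T$ consists of $e$ together with the tree path from $i$ to $j$; under $w^\zeta$ all the path edges are positive and $e$ is negative, so $w^\zeta(C)<0$, and by the same telescoping-of-$\zeta$ computation as above, $w(C)=w^\zeta(C)<0$. Thus $C$ is a negative cycle of $(G,w)$.

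The only mild subtlety is making sure the switching-invariance of the cycle weight is stated cleanly and that the spanning-tree switching function is well-defined; both are routine once one observes that each vertex of a cycle meets exactly two of its edges, so $\prod_{v\in V(C)}\zeta(v)^2=1$. I expect the main (though still minor) obstacle to be bookkeeping: being careful that the reduction to connected graphs and the fundamental-cycle argument together cover the case of loops, which the paper explicitly allows as length-one cycles — but a loop $ii$ with $w(ii)<0$ is itself a negative cycle, and such a loop can never be made positive by switching since $\zeta(i)^2 w(ii)=w(ii)<0$, so the unbalanced direction is immediate in that case and the tree argument handles the rest.
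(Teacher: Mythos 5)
Your proof is correct. The paper itself gives no argument here --- it simply records the statement as ``a direct corollary'' of Proposition~\ref{thm:balance} --- so any honest derivation has to supply roughly what you did, and yours is the standard Harary-style argument: switching-invariance of the sign of a cycle for the easy direction, and a spanning-tree switching function plus a fundamental cycle through a remaining negative edge for the converse, with loops and disconnected components handled separately. The only difference from the route the paper gestures at is that you barely use Proposition~\ref{thm:balance} at all: in your converse the invocation of the bipartition criterion is superfluous, since ``not balanced'' by definition already means your tree-based $\zeta$ leaves some edge negative. The more literal deduction from Proposition~\ref{thm:balance} would note that if the negative edges are exactly the edges crossing a cut $(V_1,V_2)$, then every cycle crosses the cut an even number of times and hence is positive, while conversely, in the absence of negative cycles one can place each vertex in $V_1$ or $V_2$ according to the parity of negative edges on a tree path from a root, which is just a relabelling of your $\zeta$. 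So the two arguments are essentially the same; yours is simply written out in the switching language rather than the bipartition language, which costs nothing and makes the corollary self-contained.
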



Let $(G,w)$ be a weighted graph and  $\mathbb A_w$ be its adjacency matrix.
For a switching function $\zeta: V(G)\to \{-1,+1\}$,
define $\mathbb D_\zeta$
to be a diagonal matrix with $(\mathbb D_\zeta)_{ii}=\zeta(i)$. Then $(G,w_1)$ is
equivalent to $(G,w_2)$ if and only if $\mathbb A_{w_1}=\mathbb D_\zeta
\mathbb A_{w_2}\mathbb D_\zeta$ for some switching function $\zeta$.
So the adjacency matrices of two equivalent weighted graphs are diagonally similar to each other.

\begin{lemma}\label{lem:diag}
Let $G$ be a bipartite graph with a unique perfect matching $M$. Then $\mathbb B^{-1}$ is diagonally similar
to a non-negative matrix if and only if the inverse of $G$ is a balanced weighted graph.
\end{lemma}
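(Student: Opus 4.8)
The plan is to relate the two conditions through the block structure of $\mathbb A$ and the explicit formula for the inverse. Recall that for a bipartite graph $G$ with bipartition $(R,C)$ and a unique perfect matching $M$, the adjacency matrix has the block form $\mathbb A = \bigl[\begin{smallmatrix} 0 & \mathbb B\\ \mathbb B^\intercal & 0\end{smallmatrix}\bigr]$, and hence $\mathbb A^{-1} = \bigl[\begin{smallmatrix} 0 & (\mathbb B^\intercal)^{-1}\\ \mathbb B^{-1} & 0\end{smallmatrix}\bigr] = \bigl[\begin{smallmatrix} 0 & (\mathbb B^{-1})^\intercal\\ \mathbb B^{-1} & 0\end{smallmatrix}\bigr]$. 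So $\mathbb A^{-1}$ is precisely the adjacency matrix $\mathbb A_w$ of the inverse weighted graph $(G^{-1},w)$ (which by Theorem~\ref{cor:inverse} has no loops), and the off-diagonal block $\mathbb B^{-1}$ is exactly the bipartite adjacency matrix of $(G^{-1},w)$ with respect to the \emph{same} bipartition $(R,C)$. This identification is the crux of the whole equivalence.

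First I would prove the forward direction. Suppose $\mathbb B^{-1}$ is diagonally similar to a non-negative matrix, say $\mathbb D_1 \mathbb B^{-1}\mathbb D_2 \ge 0$ where $\mathbb D_1,\mathbb D_2$ are $\pm1$-diagonal matrices indexed by $R$ and $C$ respectively (here I use that one may act on rows and columns independently — indeed, if only one $\pm1$-matrix is needed by the hypothesis as stated, that is a special case). Assemble these into a switching function $\zeta:V(G)\to\{-1,+1\}$ by setting $\zeta|_R$ from $\mathbb D_1$ and $\zeta|_C$ from $\mathbb D_2$, and let $\mathbb D_\zeta$ be the corresponding signing matrix. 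A short block computation shows $\mathbb D_\zeta \mathbb A^{-1}\mathbb D_\zeta$ has off-diagonal blocks $\mathbb D_1\mathbb B^{-1}\mathbb D_2$ and its transpose, both non-negative; since the diagonal blocks are zero, $\mathbb D_\zeta\mathbb A_w\mathbb D_\zeta = \mathbb A_{w^\zeta}\ge 0$. Thus $w^\zeta(ij)\ge 0$ for every edge of $G^{-1}$, so $(G^{-1},w)$ is equivalent to a non-negatively weighted graph, i.e. balanced.

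For the converse, suppose $(G^{-1},w)$ is balanced, so there is a switching function $\zeta$ with $w^\zeta(ij)>0$ for all $ij\in E(G^{-1})$, equivalently $\mathbb A_{w^\zeta} = \mathbb D_\zeta\mathbb A_w\mathbb D_\zeta \ge 0$. The only subtlety is to recover from the single global switching function $\zeta$ a signing of $\mathbb B^{-1}$ by $\pm1$-diagonal matrices that respects the partition. But the block structure does this automatically: writing $\mathbb D_\zeta = \bigl[\begin{smallmatrix}\mathbb D_1 & 0\\ 0 & \mathbb D_2\end{smallmatrix}\bigr]$ with $\mathbb D_1$ over $R$ and $\mathbb D_2$ over $C$, the $(R,C)$-block of $\mathbb D_\zeta\mathbb A_w\mathbb D_\zeta$ is $\mathbb D_1\mathbb B^{-1}\mathbb D_2\ge 0$. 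If the problem insists on \emph{one} diagonal matrix $\mathbb D$ with $\mathbb D\mathbb B^{-1}\mathbb D = \mathbb B^+\ge 0$, note the freedom to replace $\zeta$ by $-\zeta$ on all of $C$ (which does not change whether $\mathbb A_{w^\zeta}\ge 0$ since it negates the off-diagonal block twice — once as a row operation, once as a column operation — wait, that negates it once; instead use that one may independently fix signs) so that $\mathbb D_1$ and $\mathbb D_2$ can be matched up; more cleanly, since $G$ has a perfect matching it is connected component-wise balanced, and along the matching edges one propagates a consistent single signing. I expect this bookkeeping — reconciling a two-sided diagonal similarity of the rectangular matrix $\mathbb B^{-1}$ with a one-sided conjugation of the symmetric matrix $\mathbb A^{-1}$ — to be the only real obstacle; everything else is the block identity $\mathbb A^{-1} = \mathbb A_w$ together with Proposition~\ref{thm:balance}. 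Once the dictionary between $\mathbb D_\zeta$ and the pair $(\mathbb D_1,\mathbb D_2)$ is pinned down, both implications follow by direct block matrix multiplication.
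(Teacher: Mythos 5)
Your overall route is the same as the paper's: identify $\mathbb A^{-1}$ with the adjacency matrix of $(G^{-1},w)$ via the block form, translate diagonal $\pm1$-conjugation into switching functions, and invoke Proposition~\ref{thm:balance}. The forward direction as you write it is fine. The problem is exactly the step you yourself flag as ``the only real obstacle'' and then leave unresolved: in the converse, balance gives you a switching function $\zeta$, i.e.\ a pair $(\mathbb D_1,\mathbb D_2)$ with $\mathbb D_2\mathbb B^{-1}\mathbb D_1\ge 0$, whereas the lemma (with the paper's definition of diagonal similarity) requires a \emph{single} $\mathbb D$ with $\mathbb D\mathbb B^{-1}\mathbb D\ge 0$. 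Your first suggested fix --- replacing $\zeta$ by $-\zeta$ on all of $C$ --- fails, as you half-notice mid-sentence: it negates the block $\mathbb D_2\mathbb B^{-1}\mathbb D_1$ exactly once, destroying non-negativity. Your second suggestion (``propagate a consistent signing along the matching edges'') is the right instinct, but as stated it is an unproved assertion: nothing in your argument explains why $\zeta$ can be chosen, or must already be, equal on the two endpoints of each matching edge.

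The missing one-line ingredient: since $\mathbb B$ is (after the standing normalization) lower triangular with all diagonal entries $1$, so is $\mathbb B^{-1}$; hence for each matched pair the corresponding edge of $G^{-1}$ has weight $(\mathbb B^{-1})_{ii}=+1$. (Equivalently, by Theorem~\ref{cor:inverse}, the only $M$-alternating path joining two matched vertices is the matching edge itself.) Consequently any switching function $\zeta$ with $w^{\zeta}>0$ on all edges of $G^{-1}$ must satisfy $\zeta(i)\zeta(i')=+1$ on every matching edge $ii'$, i.e.\ $\mathbb D_1=\mathbb D_2$ under the matching identification of $R$ with $C$, and the single-matrix similarity $\mathbb D\mathbb B^{-1}\mathbb D\ge 0$ drops out. (This same observation is what justifies the paper's terse ``Note that $\mathbb B^{-1}$ is diagonally similar to a non-negative matrix if and only if $\mathbb A^{-1}$ is.'') With that sentence inserted, your block-matrix argument closes the converse and the proof agrees with the paper's; without it, the reconciliation of the two-sided and one-sided signings is a genuine gap.
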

\begin{proof}
Since $G$ is invertible, let $(G^{-1},w )$ be the inverse of $G$ by Theorem~\ref{cor:inverse}. Let $\mathbb A$ be the adjacency matrix of $G$ such that
\[\mathbb A=\begin{bmatrix} 0 & \mathbb B\\
                                            \mathbb B^\intercal & 0
                                            \end{bmatrix},\]
                                            where $\mathbb B$ is the bipartite adjacency matrix of $G$, which we assume without loss of generality to be a lower
                                            triangular matrix with 1 on the diagonal.
Then the inverse of $\mathbb A$ is the adjacency matrix of $(G^{-1},w)$
as follows,
\[\mathbb A^{-1}=\begin{bmatrix} 0 & (\mathbb B^\intercal)^{-1}\\
                                            \mathbb B^{-1} & 0
                                            \end{bmatrix}.\]

Note that $\mathbb B^{-1}$ is diagonally similar to a non-negative matrix if and only if
$\mathbb A^{-1}$ is diagonally similar to a non-negative matrix. In other words, if and only if
there exists a diagonal matrix $\mathbb D$ with $(\mathbb D)_{ii}\in \{-1,+1\}$
such that $\mathbb D\mathbb A^{-1}\mathbb D$ is non-negative.
Define a switching function $\zeta: V\to \{-1,+1\}$ such that
$\zeta(i)=(\mathbb D_{ii})$.
Note that
\[w ^{\zeta}(ij)=\zeta(i)w (ij)\zeta(j)=\zeta(i)(\mathbb A^{-1})_{ij}\zeta(j)=(\mathbb D\mathbb A^{-1}\mathbb D)_{ij}.\]
Hence $\mathbb A^{-1}$ is diagonally similar to a non-negative matrix
if and only if there exists a switching function $\zeta$ such that $w ^{\zeta}: E(G^{-1})\to \mathbb R^+$.
Let $V_1=\{v\in V\ | \zeta(v)=1\}$ and $V_2=\{v\in V\ |\zeta(v)=-1\}$.
So the existence of the switching function $\zeta$ is equivalent to the existence of
a bipartition $V_1$ and $V_2$ of $V$ such that $E(V_1, V_2)=\{e \ |w (e)<0\}$.
By Proposition~\ref{thm:balance}, it follows that $\mathbb B^{-1}$ is diagonally similar to a
non-negative matrix if and only if $(G^{-1},w )$ is balanced.
\end{proof}

By Lemma~\ref{lem:diag}, Godsil's problem is equivalent to ask
which bipartite graphs with unique perfect matchings
have a balanced weighted graph as its inverse.

\section{Proof of Theorem~\ref{thm:main}}

 Now, we are ready to prove our main result.\medskip

 \noindent{\bf Theorem 1.3.} Let $G$ be a bipartite graph with a unique perfect matching $M$. Then $\mathbb B^{-1}$ is
diagonally similar to a non-negative matrix if and only if $G$ does not contain an odd flower as a subgraph.
\medskip

\begin{proof} Let $G$ be a bipartite graph with a
unique perfect matching $M$ and $\mathbb B$ the bipartite adjacency matrix of $G$.
For any two vertices $i$ and $j$ of $G$, let
\[\mathcal P_{ i j}=\{P\ | P \mbox{ is an $M$-alternating path joining } i\mbox{ and } j\}.\]

\noindent $\Rightarrow$:
Assume that $\mathbb B^{-1}$ is diagonally similar to a non-negative matrix. We need to
show that $G$ does not contain an odd flower.
Suppose on the contrary that $G$ does contain a vertex subset $S=\{x_1,...,x_k\}$
such that $\mbox{Span}_M(S)$ is an odd flower.
Then all paths in $\mathcal P_{x_ix_{i+1}}$ belong to $\mbox{Span}_M(S)$.
By Theorem~\ref{cor:inverse},  $G$ has an inverse $(G^{-1},w)$ where,
\[w(x_ix_{i+1})=
\sum_{P\in \mathcal P_{x_i x_{i+1}}}  (-1)^{|E(P)\backslash M|}.\]
So $w(x_ix_{i+1})\in \mathbb Z\backslash \{0\}$ and
$w(x_ix_{i+1})<0$ if and only if $\tau_o(x_i,x_{i+1})>\tau_e(x_i,x_{i+1})$.
Note that  $\mbox{Span}_M(S)$ is an odd flower. So $C=x_1\cdots x_kx_1$ is
a negative cycle in $(G^{-1}, w)$. By Corollary~\ref{cor:negativecycle},
$(G^{-1},w)$ is not balanced.
Hence $\mathbb B^{-1}$ is not diagonally similar to a non-negative matrix by Lemma~\ref{lem:diag},
a contradiction.
 \medskip

\noindent $\Leftarrow$:
Assume that $G$ does not contain an odd flower as a subgraph. We need to show that $\mathbb B^{-1}$ is diagonally similar to a non-negative matrix.
Suppose on the contrary that $\mathbb B^{-1}$ is not diagonally similar to
a non-negative matrix.
Then by Lemma~\ref{lem:diag}, its inverse $(G^{-1},w)$ is not balanced,
and hence contains a negative cycle by Corollary~\ref{cor:negativecycle}.
Choose a shortest negative cycle  $C:=x_kx_1\cdots x_k$ (i.e., $k$ is as small as possible).
Then $w(x_ix_{i+1})\ne 0$ as $x_ix_{i+1}$ is an edge of $G^{-1}$ (subscripts modulo $k$). Hence $\tau_o(x_i,x_{i+1})\ne \tau_e(x_i,x_{i+1})$ (subscripts modulo $k$).
Let $S=\{x_1,...,x_k\}$. In the following, we are going to prove $\mbox{Span}_M(S)$ is an
odd flower.

Since $C$ is a smallest negative cycle of $(G^{-1},w)$, it
follows that $C$ has no chord, which implies that $\tau_o(x_i, x_j)=\tau_e(x_i,x_j)$ if $x_i$ and $x_j$ are not consecutive on $C$. In other words, $\tau_o(x_i,x_j)\ne \tau_e(x_i,x_j)$ if and only if $|i-j|\equiv 1$ (mod $k$).
Note that $C$ is a negative cycle. So $C$ contains an odd number
of negative edges. Hence, there is an odd number of vertex pairs $\{x_i,x_{i+1}\}$
 such that $\tau_o(x_i,x_j)>\tau_e(x_i,x_j)$.
Hence $\mbox{Span}_M(S)$ is an odd flower, a contradiction. This completes the proof.
\end{proof}

\noindent{\bf Remark.} For a matrix $\mathbb B$, its inverse can be found in $O(n^3)$ steps. Note that it takes
$O(n^2)$ steps to determine whether
the inverse $(G^{-1}, w)$ of $G$ is balanced or not.
Hence, it  can be determined in $O(n^3)$ whether $G$ has a balanced weighted graph as inverse or not.

\section*{Acknowledgement}
The authors would like to thank the anonymous referees for their valuable comments to improve the final version of the
paper.


\end{document}